\theoremstyle{plain}
\newtheorem{theorem}{Theorem}[section]
\newtheorem{proposition}[theorem]{Proposition}
\newtheorem{corollary}[theorem]{Corollary}
\newtheorem{lemma}[theorem]{Lemma}
\newtheorem{definition}[theorem]{Definition}
\newtheorem{remark}[theorem]{Remark}
\newcommand{\barj}{{\overline j}}
\newcommand{\barpartial}{{\overline \partial}}
\newcommand{\mapright}[1]{\smash{\mathop{   \hbox to 0.7cm{\rightarrowfill}}
 \limits^{#1}}}
\def\ba{\begin{array}}
\def\ea{\end{array}}
\begin{document}

\title
{Lower diameter bounds for compact shrinking Ricci solitons}

\author{Akito Futaki}
\address{Department of Mathematics, Tokyo Institute of Technology, 2-12-1,
O-okayama, Meguro, Tokyo 152-8551, Japan}
\email{futaki@math.titech.ac.jp}

\author{Yuji Sano}
\address{Department of Mathematics, Kyushu University,
744 Motooka, Nishi-ku, Fukuoka-city, Fukuoka 819-0395 Japan}
\email{sano@math.kyushu-u.ac.jp}

\date{July 09, 2010}

\begin{abstract} 
It is shown that the diameter of a compact shrinking Ricci soliton has a universal lower bound.
This is proved by extending universal estimates for the first non-zero eigenvalue of Laplacian
on compact Riemannian manifolds with lower Ricci curvature bound 
to a twisted Laplacian on compact shrinking Ricci solitons.
\end{abstract}

\thanks{The first author is supported by MEXT, Grant-in-Aid for Scientific Research (A),
No. 21244003 and Challenging Exploratory Research No. 20654007. 
The second author is supported by MEXT, Grant-in-Aid for Young Scientists (B),
No. 22740041.}

\keywords{shrinking Ricci soliton, diameter bound}

\subjclass{Primary 53C21, Secondary 53C20}

\maketitle

\section{Introduction}

In this paper we show that the diameter of compact shrinking Ricci solitons have a universal lower bound.
Recall that Ricci solitons were introduced by Hamilton in \cite{Ham88} and are self-similar solutions to the Ricci flow.
They are defined as follows.
\begin{definition}\label{soliton1}
A complete Riemannian metric $g$ on a smooth manifold $M^n$ is called a \textbf{Ricci soliton} 
if and only if there exist a positive constant $\gamma$ and a vector field $X$ such that
\begin{equation}\label{soliton2}
	2\mathrm{Ric}(g)-2\gamma g+\mathcal{L}_Xg=0,
\end{equation}
where $\mathrm{Ric}(g)$ and $\mathcal{L}_X$ respectively denote the Ricci tensor of $g$ and the Lie derivative along $X$.
Moreover, if $X$ is the gradient vector field of a smooth function, $g$ is called a \textbf{gradient Ricci soliton}.
The Ricci soliton is said to be shrinking, steady and expanding according as $\gamma > 0$, $\gamma = 0$ and $\gamma <0$. 
\end{definition}
\noindent
If $X$ is zero, then $g$ is Einstein, in which case we say that $g$ is trivial. 
Due to Perelman (Remark 3.2, \cite{Perelman}), it is known that any Ricci soliton on a compact manifold is a gradient soliton. 
(See \cite{derdzinski} for a direct Riemannian proof of Perelman's result.)
It is also known that any nontrivial gradient Ricci soliton on a compact manifold is shrinking (\cite{Ham93}, \cite{Ivey}, see also \cite{Cao06})
with $n = \dim M \ge 4$. 
Examples of nontrivial compact K\"ahler-Ricci solitons have been constructed by Koiso \cite{Koiso90},
Cao \cite{Cao96} and Wang and Zhu \cite{Wang-Zhu}.
The main result of this paper is the following.

\begin{theorem}\label{main} Let $M^n$ be a compact smooth manifold with $n = \dim M \ge 4$. 
If $g$ is a non-trivial gradient shrinking Ricci soliton on $M$ satisfying (\ref{soliton2}), then
\begin{equation}\label{soliton3}
	d_g \ge \frac {10\pi}{13 \sqrt \gamma}
\end{equation}
where $d_g$ is the diameter of $M$ with respect to $g$.
\end{theorem}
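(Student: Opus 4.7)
My plan, following the strategy hinted at in the abstract, is to convert the diameter estimate into an eigenvalue analysis for the drift Laplacian $\Delta_f u := \Delta u - g(\nabla f,\nabla u)$, the natural self-adjoint operator on $L^2(M, e^{-f}\,d\mathrm{vol}_g)$. Rewriting the soliton equation (\ref{soliton2}) gives $\mathrm{Ric}_f := \mathrm{Ric}(g) + \nabla^2 f = \gamma g$, so the soliton behaves like a positively curved Einstein manifold from the Bakry--\'Emery viewpoint.

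Step 1 is to produce an explicit eigenfunction of $\Delta_f$ that furnishes an \emph{upper} bound on $\lambda_1(\Delta_f)$. The trace of (\ref{soliton2}) gives $R + \Delta f = n\gamma$, while taking the divergence of (\ref{soliton2}) and combining with the twice-contracted second Bianchi identity produces Hamilton's integrability identity $R + |\nabla f|^2 - 2\gamma f = \mathrm{const}$. Subtracting the two yields $\Delta_f f = -2\gamma f + C$, so after adjusting $f$ by a constant the potential itself becomes a nonzero eigenfunction with $\Delta_f f = -2\gamma f$; nonzeroness follows because nontriviality of the soliton is equivalent to $f$ being nonconstant. Hence $\lambda_1(\Delta_f) \le 2\gamma$.

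Step 2 is to bound $\lambda_1(\Delta_f)$ from \emph{below} in terms of $d_g$, via a sharp Li--Yau/Zhong--Yang-type gradient estimate for eigenfunctions of $\Delta_f$, adapted to the weighted setting. The central analytic input is the weighted Bochner formula
\begin{equation*}
\tfrac{1}{2}\Delta_f |\nabla u|^2 = |\nabla^2 u|^2 + g(\nabla \Delta_f u,\nabla u) + \mathrm{Ric}_f(\nabla u,\nabla u),
\end{equation*}
whose last term is exactly $\gamma |\nabla u|^2$ on our soliton. Normalizing an eigenfunction $u$ so that $\max u = 1$ and $\min u = -a$ with $a \in (0,1]$, one applies the maximum principle for $\Delta_f$ (rather than for $\Delta$) to an auxiliary quantity of the form $P = |\nabla u|^2 + \lambda\,\Phi(u)$, with $\Phi$ tuned so that the Bochner inequality collapses to a Riccati-type ODE inequality in $u$. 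Solving this yields a pointwise estimate $|\nabla u|^2 \le F_\lambda(u)$ vanishing at $u = 1$ and $u = -a$. Integration along a minimizing geodesic joining the extremal points of $u$ then gives
\begin{equation*}
d_g \;\ge\; \int_{-a}^{1}\frac{du}{\sqrt{F_\lambda(u)}},
\end{equation*}
and substituting $\lambda = 2\gamma$ from Step~1 produces a universal lower bound $d_g \ge C/\sqrt{\gamma}$.

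The principal obstacle is carrying out Step 2 with sufficient sharpness. A naive Zhong--Yang bound using only $\mathrm{Ric}_f \ge 0$ would give $\lambda_1(\Delta_f) \ge \pi^2/d_g^2$ which, combined with $\lambda_1(\Delta_f) \le 2\gamma$, yields merely $d_g \ge \pi/\sqrt{2\gamma} < 10\pi/(13\sqrt{\gamma})$. Hence the strict positivity $\mathrm{Ric}_f = \gamma g > 0$ (rather than mere nonnegativity) must be built quantitatively into the choice of $\Phi$ and the subsequent definite integral. The explicit constant $10/13$ is presumably pinned down by an optimization over $\Phi$ and the normalization parameter $a$ so as to maximize the geodesic integral above. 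The hypothesis $n \ge 4$ enters only to guarantee the existence of a nontrivial solution to (\ref{soliton2}); the proof itself should be dimension-free.
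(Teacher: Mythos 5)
Your overall strategy is exactly the paper's: Step 1 (the potential $f$ satisfies $\Delta_f f = -2\gamma f$ after normalization, hence $\lambda_1(\Delta_f)\le 2\gamma$) is precisely the paper's Lemma \ref{eigen}, derived the same way from the traced soliton equation and Hamilton's identity $R+|\nabla f|^2-2\gamma f=\mathrm{const}$. Step 2 is also the paper's route in outline: a Zhong--Yang/Ling-type refinement of the eigenvalue lower bound for the drift Laplacian under $\mathrm{Ric}_f\ge \gamma g$, combined with the upper bound from Step 1. The difficulty is that your Step 2 is only a plan, not a proof: you write that the constant $10/13$ is ``presumably pinned down by an optimization over $\Phi$ and the normalization parameter $a$.'' But the entire quantitative content of the theorem lives in that step. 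The paper's actual input is the estimate $\lambda \ge \pi^2/d^2 + \tfrac{31}{100}(n-1)K$ for the Bakry--\'Emery Laplacian (Theorem \ref{thm:main_ling}, extending Ling's theorem), which with $(n-1)K=\gamma$ and $\lambda\le 2\gamma$ gives $\tfrac{169}{100}\gamma\ge \pi^2/d^2$, i.e. exactly $d_g\ge \tfrac{10\pi}{13\sqrt\gamma}$. Without producing the explicit constant $\tfrac{31}{100}$ (or an equivalent sharpened estimate), your argument only delivers the weaker bound $d_g\ge \pi/\sqrt{2\gamma}$ that you yourself note is insufficient, so the stated inequality (\ref{soliton3}) does not follow from what you have written.

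Moreover, the adaptation of the unweighted Zhong--Yang/Ling machinery to $\Delta_f$ is not as automatic as your sketch suggests. While the drift terms cancel in the first-order (gradient-estimate) part of the Bochner argument, they do \emph{not} cancel in Ling's integral estimate of $\int|\nabla\nabla v|^2$: the paper can only prove $\lambda\ge (n-1)K$ (Proposition \ref{prop:lemma3}) rather than Ling's $\lambda\ge nK$, and one must recheck Ling's case analysis (in particular the case $0.765\le a<\tfrac{\pi^2}{4}\delta$) to see that the constant $\tfrac{31}{100}$ survives with this weaker input. So the claim that the weighted extension is routine, and the constant merely an optimization afterthought, is where your proposal has a genuine gap; carrying out that maximum-principle/test-function analysis with explicit constants is the core of the paper's Section 3.
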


We may compare this result with the case of Einstein metrics. Suppose in (\ref{soliton2}) $X = 0$ so that $g$ is an Einstein metric
satisfying $\mathrm{Ric}(g) = \gamma g$.
Then by Myers' theorem we have 
\begin{equation}\label{soliton4}
	d_g \le \sqrt{\frac{n-1}\gamma}\pi.
\end{equation}
Thus for compact Einstein manifolds the diameter is bounded from above while for nontrivial compact gradient shrinking Ricci solitons the diameter is bounded
from below by a universal constant.
Theorem \ref{main} also implies a gap result for gradient shrinking Ricci solitons on a compact Riemannian manifold, i.e., if $d_g$ is strictly less than $\frac{10\pi}{13\sqrt{\gamma}}$, then $g$ should be Einstein.
Other different types of gap theorems for gradient shrinking Ricci solitons are known.
For example, see \cite{Fer-Gar} and \cite{Lihaozhao08}.

The proof of Theorem \ref{main} is given by two steps. 
Assume that $g$ is a gradient shrinking soliton with respect to a gradient vector field $X$.
Let $f\in C^\infty(M)$ be the potential function of $X$.
Then, the equation (\ref{soliton2}) is equivalent to
\begin{equation*}
	R_{ij}-\gamma g_{ij}+\nabla_i\nabla_j f=0
\end{equation*}
where $R_{ij}$ denotes the Ricci curvature.
Since $\gamma$ is positive, $\mathrm{Ric}(g)+\mathrm{Hess}(f)$ is positive definite.
This means that Bakry-\'Emery geometry works on our case.
Let $\Delta_f$ be the corresponding Bakry-\'Emery Laplacian, which is defined by
$$
	\Delta_f=\Delta-\nabla f \cdot \nabla
$$
where $\Delta=g^{ij}\nabla_i\nabla_j$.
We normalize $f$ so that it satisfies
\begin{equation*}
	\int_M fe^{-f} dV_g=0.
\end{equation*}
The first step of the proof of Theorem \ref{main} is to show that $-2\gamma$ is an eigenvalue for $\Delta_f$. 
The second step is then to show that if $-\lambda$ is the first non-zero eigenvalue of $\Delta_f$ then $\lambda$ is bounded below by a universal constant:
\begin{equation}\label{soliton7}
		\lambda \ge \frac{\pi^2}{d_g^2}+\frac{31}{100}\gamma
\end{equation}
for $n\ge 3$. This estimate is an extension of a result of Ling \cite{ling07} in the case of ordinary Laplacian to the case of
Bakry-\'Emery Laplacian. A similar type of extension has been found by Lu and Rowlett \cite{LuRowlett} who extended a result of
Li and Yau \cite{LiYau80} in the case of ordinary Laplacian to the case of Bakry-\'Emery Laplacian. 

This paper is organized as follows. In section 2 we show that that $-2\gamma$ is an eigenvalue for $\Delta_f$. In section 3 we give a proof of
(\ref{soliton7}). In section 4 we prove Theorem \ref{main}.

\section{An eigenfunction for the twisted Laplacian}

Let $M$ be an $n$-dimensional compact Riemannian manifold and $g$ be a gradient shrinking soliton on $M$
so that we have 
\begin{equation}\label{soliton5}
	R_{ij}-\gamma g_{ij}+\nabla_i\nabla_j f=0
\end{equation}
with $\gamma$ positive, and thus $\mathrm{Ric}(g)+\mathrm{Hess}(f)$ is positive definite.
Let $\Delta_f$ be the corresponding Bakry-\'Emery Laplacian defined by
\begin{equation*}
	\Delta_f=\Delta-\nabla f \cdot \nabla
\end{equation*}
where $\Delta=g^{ij}\nabla_i\nabla_j$.
We normalize $f$ so that it satisfies
\begin{equation}\label{soliton6}
	\int_M fe^{-f} dV_g=0.
\end{equation}

\begin{lemma}\label{eigen}
The function $f$ is an eigenfunction of $\Delta_f$ with eigenvalue equal to $-2\gamma$.
\end{lemma}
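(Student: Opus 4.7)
The plan is to compute $\Delta_f f$ explicitly using standard soliton identities, and then fix an undetermined additive constant by invoking the normalization (\ref{soliton6}).

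First, I would take the trace of the soliton equation (\ref{soliton5}) to obtain
\[
\Delta f = n\gamma - R,
\]
where $R$ is the scalar curvature. This already gives an expression for $\Delta f$. Next, I would establish the well-known Hamilton-type identity
\[
R + |\nabla f|^2 - 2\gamma f = C
\]
for some constant $C$. The derivation uses the contracted second Bianchi identity $\nabla^i R_{ij} = \tfrac{1}{2}\nabla_j R$ together with the Ricci commutation formula $\nabla^i\nabla_i\nabla_j f = \nabla_j\Delta f + R_{jk}\nabla^k f$: applying $\nabla^i$ to (\ref{soliton5}), substituting $\Delta f = n\gamma - R$, and reusing (\ref{soliton5}) to express $R_{jk}\nabla^k f$, one finds $\nabla_j\bigl(R + |\nabla f|^2 - 2\gamma f\bigr) = 0$.

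Combining the two identities gives
\[
\Delta_f f = \Delta f - |\nabla f|^2 = n\gamma - R - |\nabla f|^2 = -2\gamma f + (n\gamma - C),
\]
so the claim reduces to showing $C = n\gamma$. For this I would integrate against the weighted measure $e^{-f}\,dV_g$. Because $\Delta_f$ is symmetric with respect to $e^{-f}\,dV_g$, we have $\int_M (\Delta_f f)\, e^{-f}\,dV_g = 0$. The displayed formula above then yields
\[
0 = -2\gamma \int_M f\, e^{-f}\,dV_g + (n\gamma - C)\int_M e^{-f}\,dV_g.
\]
The normalization (\ref{soliton6}) kills the first term, and since $\int_M e^{-f}\,dV_g > 0$ we conclude $C = n\gamma$, completing the proof that $\Delta_f f = -2\gamma f$.

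I do not expect any single step to be a serious obstacle: the conserved quantity $R + |\nabla f|^2 - 2\gamma f$ is standard in the soliton literature, and the rest is a short computation. The only subtle point is that without the normalization (\ref{soliton6}) one only obtains $\Delta_f f = -2\gamma f + \text{const}$; the role of (\ref{soliton6}) is precisely to pin down the additive constant, and this is where pairing against $e^{-f}\,dV_g$ via self-adjointness of $\Delta_f$ is essential.
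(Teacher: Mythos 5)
Your proposal is correct and follows essentially the same route as the paper: trace the soliton equation, derive Hamilton's identity $R+|\nabla f|^2-2\gamma f=\mathrm{const}$ via the contracted second Bianchi identity and the Ricci commutation formula, combine to get $\Delta_f f=-2\gamma f+\mathrm{const}$, and fix the constant by integrating against $e^{-f}\,dV_g$ and invoking the normalization (\ref{soliton6}).
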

\begin{proof}
Taking the covariant derivative of the left hand in (\ref{soliton5}) we have 
\begin{eqnarray*}
		0=
		\nabla_k(R_{ij}-\gamma g_{ij}+\nabla_i\nabla_j f)
	&=&
		\nabla_k R_{ij}+\nabla_i\nabla_k\nabla_j f
		- R_{ki}{}^l{}_j\nabla_l f
	\\
	&=&
		\nabla_k R_{ij}-\nabla_i R_{kj}
		- R_{ki}{}^l{}_j \nabla_l f.
\end{eqnarray*}
Taking the trace of the above on $i$ and $j$,
$$
	\nabla_k R - \nabla_i R_{k}{}^i - R_{k}{}^l\nabla_l f=0,
$$
where $R$ is the scalar curvature of $g$.
Then this and the contracted second Bianchi identity
$$
	\nabla_i R_k{}^i=\frac{1}{2}\nabla_k R
$$
induce
\begin{equation}\label{eq:ricci_scalar}
	\nabla_k R=2R_{k}{}^l\nabla_l f.
\end{equation}
From (\ref{soliton5}) and (\ref{eq:ricci_scalar}), we get
\begin{eqnarray*}
		\nabla_k (R-2\gamma f+|\nabla f|^2)
	&=&
		2R_k{}^l\nabla_l f-2\gamma\nabla_k f+2\nabla_k\nabla_l f\nabla^l f
	\\
	&=&
		2(R_{kl}-\gamma g_{kl}+\nabla_k\nabla_l f)\nabla^l f
	\\
	&=&
		0.
\end{eqnarray*}
Hence, there exists some constant $C$ such that
\begin{equation}\label{eq:constant}
	R-2\gamma f+|\nabla f|^2=C.
\end{equation}
Taking the trace of (\ref{soliton5}) on $i$ and $j$, we have
\begin{equation}\label{eq:trace_srs}
	R-n\gamma +\Delta f=0.
\end{equation}
From (\ref{eq:constant}) and (\ref{eq:trace_srs}), we have
\begin{equation}\label{eq:uptoconstant}
	\Delta_f f
	=\Delta f -|\nabla f|^2
	= -2\gamma f+C'
\end{equation}
where $C'$ is some constant.
Since
$$
	\int_M \Delta_f e^{-f} dV_g=0,
$$
the constant $C'$ in (\ref{eq:uptoconstant}) must be zero.
The proof of the lemma is completed.
\end{proof}

\begin{remark}\label{remark1} Lemma \ref{eigen} says that on a compact nontrivial gradient shrinking Ricci soliton, there always exists a 
eigenfunction of $\Delta_f$ with a fixed 
nonzero eigenvalue. 
A similar result holds for Fano manifolds, i.e. compact complex manifolds with positive first Chern class. If $M$ is such a manifold and
$g$ is a K\"ahler metric whose K\"ahler form represents $c_1(M)$ then there exists a smooth function $f$ such that 
$$R_{i\barj} = g_{i\barj} + \partial_i\barpartial_j f.$$
It is shown in \cite{futaki87} that if $M$ has non-zero holomorphic vector fields then $\Delta_f $ has eigenfunctions with eigenvalue equal to
$-1$. The gradient vector fields of the eigenfunctions are the holomorphic vector fields.
\end{remark}

\section{Eigenvalue estimates for Bakry-\'Emery Laplacian}
In this section, we extend Theorem 1 in \cite{ling07} to Bakry-\'Emery geometry.
We shall state the main result of this section in the end of this section (see Theorem \ref{thm:main_ling}).

Let $(M, g, \phi)$ be a Bakry-\'Emery manifold, that is to say, a triple of a Riemannian manifold $M$ with a Riemannian metric $g$ and a 
weighted volume form $e^{-\phi}dV_g$.
The Bakry-\'Emery Ricci curvature is defined by
$$
	\mathrm{Ric}_\phi:=\mathrm{Ric}(g)+\mathrm{Hess}(\phi),
$$
and the Bakry-\'Emery Laplacian is defined by
$$
	\Delta_\phi:=\Delta-\nabla\phi\cdot\nabla
$$
where $\mathrm{Ric}(g)$ denotes the Ricci tensor of $g$ and $\mathrm{Hess}(\phi)$ the Hessian of $\phi$.
Here we assume that $M$ is compact and has no boundary.
Also we assume that $\mathrm{Ric}_\phi$ is strictly positive as follows.
For some constant $K>0$,
\begin{equation}
	\mathrm{Ric}_\phi \ge (n-1)K\,g
\end{equation}
for all $x\in M$.

Let $u$ be an eigenfunction of the first non-zero eigenvalue $-\lambda$, i.e., $\Delta_\phi u=-\lambda u$.
We normalize $u$ as follows.
If $-\min u > \max u$, we replace $u$ by $-u$.
Otherwise, we keep $u$ unchanged.
Then, denoting $u/\max u$ by the same letter $u$ again, we can assume that $u$ satisfies
$$
	\max u=1,\,\, \min u=-k,\,\, 0<k\le 1.
$$
We define a function $v$ by
$$
	v:=[u-(1-k)/2][(1+k)/2].
$$
Then, $v$ satisfies
$$
	\max v=1,\,\, \min v=-1,
$$
and
$$
	\Delta_\phi v=-\lambda(v+a),
$$
where
$$
	a:=(1-k)/(1+k).
$$
Note that $0\le a < 1$.
%%%%%%
\begin{proposition}\label{prop:lemma3}
	$$
		\lambda \ge (n-1)K.
	$$
\end{proposition}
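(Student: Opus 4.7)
The plan is to prove this Lichnerowicz-type bound via the weighted Bochner--Weitzenb\"ock identity for the Bakry--\'Emery Laplacian. The crucial identity I will establish is
\begin{equation*}
\frac{1}{2}\Delta_\phi |\nabla v|^2 = |\mathrm{Hess}(v)|^2 + \langle \nabla \Delta_\phi v, \nabla v\rangle + \mathrm{Ric}_\phi(\nabla v, \nabla v),
\end{equation*}
which I would derive by starting from the classical Bochner formula for $\frac{1}{2}\Delta|\nabla v|^2$ and carefully tracking the extra terms produced by the drift $-\nabla\phi\cdot\nabla$. Concretely, rewriting $\langle \nabla \Delta v, \nabla v\rangle$ in terms of $\langle \nabla \Delta_\phi v, \nabla v\rangle$ introduces a $\mathrm{Hess}(\phi)(\nabla v,\nabla v)$ contribution, which combines with $\mathrm{Ric}(\nabla v,\nabla v)$ to yield $\mathrm{Ric}_\phi(\nabla v,\nabla v)$, together with a mixed Hessian term that cancels exactly against $-\frac{1}{2}\langle \nabla\phi,\nabla |\nabla v|^2\rangle$.

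Next I would integrate this identity against the weighted measure $e^{-\phi}dV_g$ over $M$. Since $M$ is compact and boundaryless, $\int_M \Delta_\phi h\, e^{-\phi}dV_g = 0$ for every smooth $h$ by integration by parts, so the left-hand side vanishes. The eigenvalue equation $\Delta_\phi v = -\lambda(v+a)$ gives $\nabla \Delta_\phi v = -\lambda\nabla v$, since the additive constant $a$ drops under differentiation, and hence $\langle \nabla \Delta_\phi v, \nabla v\rangle = -\lambda |\nabla v|^2$. The integrated identity then becomes
\begin{equation*}
0 = \int_M |\mathrm{Hess}(v)|^2 e^{-\phi} dV_g - \lambda \int_M |\nabla v|^2 e^{-\phi} dV_g + \int_M \mathrm{Ric}_\phi(\nabla v,\nabla v)\, e^{-\phi} dV_g.
\end{equation*}

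To conclude, I would invoke the curvature hypothesis $\mathrm{Ric}_\phi \ge (n-1)K\,g$ and simply discard the non-negative $|\mathrm{Hess}(v)|^2$ term to obtain
\begin{equation*}
\lambda \int_M |\nabla v|^2 e^{-\phi} dV_g \ge (n-1)K \int_M |\nabla v|^2 e^{-\phi} dV_g,
\end{equation*}
and since $v$ is non-constant, so $\int_M |\nabla v|^2 e^{-\phi} dV_g > 0$, dividing through yields $\lambda \ge (n-1)K$. The only non-routine step is the weighted Bochner identity itself, and even this is a short index computation; once it is in hand the rest of the estimate is immediate. I would remark that no dimension-dependent Cauchy--Schwarz bound of the form $|\mathrm{Hess}(v)|^2 \ge \frac{1}{n}(\Delta v)^2$ is used, which is why the conclusion is $(n-1)K$ rather than the classical Lichnerowicz bound $nK$---no synthetic dimension constraint has been imposed on the weight $\phi$.
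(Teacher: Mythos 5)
Your proposal is correct, and it is essentially the paper's argument: the authors carry out exactly this weighted Bochner computation pointwise (handling the drift term $\nabla\phi\cdot\nabla$ explicitly so that $\mathrm{Ric}+\mathrm{Hess}(\phi)$ appears), arrive at $\tfrac12\Delta_\phi|\nabla v|^2\ge((n-1)K-\lambda)|\nabla v|^2+|\nabla\nabla v|^2$, and then integrate against $e^{-\phi}dV_g$ just as you do. Stating the weighted Bochner--Weitzenb\"ock identity cleanly before integrating is only a cosmetic reorganization of the same proof.
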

%%%%%%
\begin{remark}
This estimate is weaker than the one in \cite{ling07}.
\end{remark}
\begin{proof}
At any $x\in M$, we have
\begin{eqnarray*}
		\Delta(|\nabla v|^2)
	&=&
		\nabla^j\nabla_j(\nabla_i v \nabla^i v)
	\\
	&=&
		2\nabla^j(\nabla_j\nabla_i v \nabla^i v)
	\\
	&=&
		2\nabla^j\nabla_j\nabla_i v\nabla^i v+2|\nabla\nabla v|^2
	\\
	&=&
		2(
		\nabla_i(\Delta v)\nabla^i v
		+\mathrm{Ric}(\nabla v, \nabla v))
		+2|\nabla\nabla v|^2
	\\
	&\ge&
		2\{
		\nabla_i(-\lambda(v+a)
		+\nabla^j\phi\nabla_j v
		)\nabla^i v
		+(n-1)K|\nabla v|^2
	\\
	&&
		{-\nabla_i\nabla_j \phi\nabla^i v\nabla^j v}
		\}
		+2|\nabla\nabla v|^2
	\\
	&=&
		-2\lambda|\nabla v|^2
		{+2\nabla^i v\nabla_i\nabla_j v \nabla^j \phi}
		+2(n-1)K|\nabla v|^2+2|\nabla\nabla v|^2.
\end{eqnarray*}
We have
$$
	\nabla^j \phi\nabla_j(|\nabla v|^2)
	=
	2\nabla^i v \nabla_i \nabla_j v \nabla^j \phi.
$$
Then, we have
$$
	\frac{1}{2}\Delta_{\phi}(|\nabla v|^2)
	\ge
	((n-1)K-\lambda)|\nabla v|^2+|\nabla\nabla v|^2
	\ge
	((n-1)K-\lambda)|\nabla v|^2.
$$
Integrating both sides in the above inequality with respect to $e^{-\phi}dV_g$, we get the desired estimate.
The proof is completed.
\end{proof}
\begin{remark}
In \cite{ling07}, the integral of $|\nabla\nabla v|^2$ over $M$ with respect to $dV_g$ is estimated by the integral of $|\nabla v|^2$, then it implies that $\lambda \ge nK$.
However, the same way does not hold in the case of Bakry-\'Emery geometry.
In fact, the error terms, which come from the gap between $\Delta$ and $\Delta_\phi$, are not cancelled.
\end{remark}

%%%%%%
\begin{proposition}
	Let $b>1$ be an arbitrary constant. Then, $v$ satisfies
	\begin{equation}\label{eq:estimate_nabla_v}
		\frac{|\nabla v|^2}{b^2-v^2}\le \lambda (1+a).
	\end{equation}
\end{proposition}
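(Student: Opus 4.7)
The plan is to apply the maximum principle to
$$
P := \frac{|\nabla v|^2}{b^2-v^2},
$$
which is smooth on $M$ since $|v|\le 1<b$ forces $b^2-v^2\ge b^2-1>0$. Let $x_0\in M$ be a maximum point of $P$. If $|\nabla v|(x_0)=0$ the estimate is immediate, so I assume $P(x_0)>0$.

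Write $F=|\nabla v|^2$ and $G=b^2-v^2$. The first step is to exploit $\nabla P(x_0)=0$, which reads $\nabla F = P\,\nabla G = -2vP\,\nabla v$ at $x_0$. In index form this says $\nabla v$ is an eigenvector of $\nabla^2 v$ with eigenvalue $-vP$ at $x_0$, hence
$$
|\nabla^2 v|^2 \;\ge\; v^2 P^2 \quad\text{at } x_0.
$$

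Next I would compute $\Delta_\phi P$ directly. A short calculation shows
$$
\Delta_\phi P \;=\; \frac{\Delta_\phi F}{G} \;-\; \frac{F\,\Delta_\phi G}{G^2} \;-\; \frac{2\,\nabla F\cdot\nabla G}{G^2} \;+\; \frac{2F|\nabla G|^2}{G^3},
$$
and at $x_0$ the relation $\nabla F = P\nabla G$ together with $F/G=P$ makes the last two terms cancel. Thus $\Delta_\phi P(x_0)\le 0$ reduces to $\Delta_\phi F(x_0)\le P(x_0)\,\Delta_\phi G(x_0)$. For the left side I would apply the Bakry--Émery Bochner formula
$$
\tfrac{1}{2}\Delta_\phi F \;=\; |\nabla^2 v|^2 + \langle\nabla v,\nabla\Delta_\phi v\rangle + \mathrm{Ric}_\phi(\nabla v,\nabla v),
$$
then use $\Delta_\phi v=-\lambda(v+a)$ and $\mathrm{Ric}_\phi\ge 0$ to obtain $\Delta_\phi F\ge 2v^2P^2-2\lambda F$ at $x_0$. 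For the right side a direct computation, using $\Delta_\phi v=-\lambda(v+a)$ once more, yields $\Delta_\phi G = -2F+2\lambda v(v+a)$.

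Plugging these into $\Delta_\phi F\le P\Delta_\phi G$, substituting $F(x_0)=P(x_0)G(x_0)$, and dividing by $P(x_0)>0$, the inequality collapses to
$$
P(x_0)\bigl(v^2+G\bigr)\;\le\;\lambda\bigl(G+v(v+a)\bigr),
$$
that is, $P(x_0)\,b^2\le\lambda(b^2+av)$. Since $|v|\le 1$ and $0\le a<1$, this gives $P(x_0)\le \lambda(1+a/b^2)\le\lambda(1+a)$, and the estimate holds on all of $M$ by maximality of $x_0$. The only delicate steps are verifying the clean cancellation of the cross terms in $\Delta_\phi P$ at a critical point and extracting the lower bound $|\nabla^2 v|^2\ge v^2P^2$ from the eigenvector identity; everything else is algebra.
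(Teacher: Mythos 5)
Your argument is correct, and it takes a genuinely different (if closely related) route from the paper. The paper, following Ling, applies the maximum principle to the auxiliary function $|\nabla v|^2+Av^2$ with the constant $A=\lambda(1+a)+\varepsilon$ fixed in advance, works componentwise in an orthonormal frame adapted to $\nabla v$ at the maximum point, and only at the end lets $\varepsilon\to 0$ and converts the resulting bound $|\nabla v|^2\le \lambda(1+a)(1-v^2)$ into the stated inequality for each $b>1$. You instead maximize the quotient $P=|\nabla v|^2/(b^2-v^2)$ itself, which is smooth exactly because $b>1$ keeps the denominator at least $b^2-1>0$. The shared core is the same in both proofs: the first-order condition at the maximum forces $\nabla v$ to be an eigenvector of the Hessian (giving $|\nabla\nabla v|^2\ge v^2P^2$, the analogue of the paper's inequality $|\nabla\nabla v|^2\ge A^2v^2$), followed by the Bochner identity and $\mathrm{Ric}_\phi\ge 0$. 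What your packaging buys is a cleaner bookkeeping: the weighted quotient rule for $\Delta_\phi$ with the cross terms cancelling at a critical point (which indeed follows from $\nabla F=P\,\nabla G$ and $F=PG$ there), the Bakry--\'Emery Bochner formula stated invariantly rather than frame computations, no $\varepsilon$-perturbation or limiting step, and in fact the marginally sharper conclusion $|\nabla v|^2\le\lambda\,(1+a/b^2)\,(b^2-v^2)$. What the paper's choice buys is the $b$-independent pointwise estimate $|\nabla v|^2\le\lambda(1+a)(1-v^2)$, from which the proposition for every $b>1$ follows by monotonicity of the denominator; either statement suffices for the subsequent gradient/diameter argument. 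Your two flagged ``delicate steps'' both check out as written, so no gaps remain.
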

%%%%%%
\begin{proof}
Consider the function on $M$ defined by
\begin{equation}
	P(x):=|\nabla v|^2+Av^2, \,\, A:=\lambda(1+a)+\varepsilon
\end{equation}
for small $\varepsilon >0$.
Let $x_0\in M$ be a point which $P$ attains its maximum.
The maximum principle implies  
\begin{equation}
	\nabla P(x_0)=0,\,\, \Delta P(x_0)\le 0.
\end{equation}
There are two cases, either $\nabla v(x_0)=0$ or $\nabla v(x_0)\neq 0$.

If $\nabla v(x_0)=0$, then
$$
	|\nabla v|^2+Av^2 \le P(x_0)=Av^2(x_0) \le A.
$$
So
\begin{eqnarray*}
		\frac{|\nabla v|^2}{b^2-v^2}
	\le
		\frac{1-v^2}{b^2-v^2}A
	\le
		A.
\end{eqnarray*}
Letting $\varepsilon \to 0$, we ge (\ref{eq:estimate_nabla_v}).

If $\nabla v(x_0)\neq 0$, then we rotate the local orthonormal frame about $x_0$ such that
$$
	|\nabla_1v(x_0)|=|\nabla(x_0)|\neq 0,\,\,\,
	\nabla_i v(x_0)=0 \,\, \mbox{for } i\ge 2.
$$
Since
$$
	\frac{1}{2}\nabla_1 P(x_0)
	=
	\nabla^1 v\nabla_1\nabla_1 v+Av\nabla_1 v
	=0,
$$
we have
$$
	\nabla_1\nabla_1 v=-Av.
$$
For $i\ge2$, since 
$$
	\frac{1}{2}\nabla_i P(x_0)
	=
	\nabla^j v\nabla_i\nabla_j v+Av\nabla_i v
	=
	\nabla^1 v\nabla_i\nabla_1 v
	=0,
$$
we have
$$
	\nabla_i\nabla_1 v=0.
$$
From these two equalities, we get
\begin{equation}\label{eq:nabla_nabla}
	|\nabla\nabla v|^2 \ge A^2v^2
\end{equation}
and
\begin{equation}\label{eq:nabla_nabla_nabla}
	\nabla^j v\nabla_j\nabla_i v=-Av\nabla_i v
\end{equation}
at $x_0\in M$.
Then, at $x_0\in M$, we have
\begin{eqnarray*}
		0
	&\ge&
		\frac{1}{2}\Delta P(x_0)
	=
		\frac{1}{2}\Delta(|\nabla v|^2+Av^2)
	\\
	&=&	
		|\nabla\nabla v|^2	
		+
		\nabla^j\nabla_j\nabla_i v \nabla^i v
		+
		A|\nabla v|^2
		+
		Av\Delta v
	\\
	&=&
		A^2v^2	
		+
		\nabla^j\nabla_j\nabla_i v \nabla^i v
		+
		A|\nabla v|^2
		+
		Av\Delta v
		\quad
		(\mbox{due to } (\ref{eq:nabla_nabla}))
	\\
	&=&
		A^2v^2	
		+
		\nabla^i v (\nabla_i(\Delta v))
		+
		\mathrm{Ric}(\nabla v,\nabla v)
		+
		A|\nabla v|^2
		+
		Av\Delta v
	\\
	&\ge&
		A^2v^2	
		+
		\nabla^i v (\nabla_i(-\lambda(v+a)
		{+\nabla_j\phi\nabla^j v}
		))
		+
		(n-1)K(\nabla v,\nabla v)
		\\
		&&
		{-\nabla_i\nabla_j \phi\nabla^i v\nabla^j v}
		+
		A|\nabla v|^2
		+
		Av(-\lambda(v+a)
		{+\nabla_i\phi\nabla^i v}
		)
	\\
	&=&
		Av^2-\lambda|\nabla v|^2+(n-1)K|\nabla v|^2
		+A|\nabla v|^2-A\lambda v^2-aA\lambda v
	\\
	&&
		(\mbox{due to }(\ref{eq:nabla_nabla_nabla}))
	\\
	&\ge&
		(A-\lambda)|\nabla v|^2+A(A-\lambda)v^2-aA\lambda v.
\end{eqnarray*}
Then, at $x_0\in M$, we have
\begin{eqnarray*}
		|\nabla v|^2+(A-\varepsilon)v^2
	&\le&
		-Av^2+\frac{aA\lambda v}{a\lambda +\varepsilon}	
		+(A-\varepsilon)v^2
	\\
	&=&
		-\varepsilon v^2+\frac{aA\lambda v}{a\lambda+\varepsilon}.
\end{eqnarray*}
Letting $\varepsilon\to 0$, we get
$$
	|\nabla v|^2(x)+Av^2(x) \le A
$$
for any $x\in M$.
Therefore, we can get the desired estimate as the former case.
The proof is completed.
\end{proof}
Define a function $Z$ by
$$
	Z(t):= \max_{x\in U(t)} \frac{|\nabla v|^2}{\lambda(b^2-v^2)}
$$
where $U(t):=\{x\in M \mid \sin^{-1}(v(x)/b)=t\}$.
Note that 
$$
	t\in [-\sin^{-1}(1/b), \sin^{-1}(1/b)].
$$
Let
$$
	c:=\frac{a}{b},\,\, 
	\alpha:=\frac{1}{2}(n-1)K, \,\,
	\delta:=\frac{\alpha}{\lambda}.
$$
%%%%%%%%%%%
\begin{proposition}\label{prop:theorem5}
	If the function $z:[-\sin^{-1}(1/b), \sin^{-1}(1/b)]\to \mathbb{R}$ satisfies the following
	\begin{enumerate}
		\item\label{item:1}
			$z(t)\ge Z(t)$ for all $t\in[-\sin^{-1}(1/b), \sin^{-1}(1/b)]$;
		\item\label{item:2}
			there exists some $x_0\in M$ such that $z(t_0)=Z(t_0)$ at $t_0=\sin^{-1}(v(x_0)/b)$;
		\item\label{item:3}
			$z(t_0)>0$;
\end{enumerate}
then we have
\begin{eqnarray}
	\nonumber
			0
	&\le&
			\frac{1}{2}\ddot{z}(t_0)\cos^2 t_0
			-
			\dot{z}(t_0)\cos t_0\sin t_0
			-
			z(t_0)
			+
			1
			+
			c\sin t_0
			-
			2\delta\cos^2 t_0
	\\
	\label{eq:test_estimate}
	&&
		-
		\frac{\dot{z}(t_0)}{4z(t_0)}\cos t_0
		\{
			\dot{z}(t_0)\cos t_0-2z(t_0)\sin t_0 +2\sin t_0 +2c
		\}.
\end{eqnarray}
\end{proposition}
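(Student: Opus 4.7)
The proof is a maximum-principle comparison at the touching point $x_0$. Define on $M$ the two scalar functions $Q(x) := |\nabla v(x)|^2/(\lambda(b^2 - v(x)^2))$ and $\tilde z(x) := z(\sin^{-1}(v(x)/b))$. Hypotheses (a)--(c) say $\tilde z - Q\ge 0$ on $M$ with equality at $x_0$ and $Q(x_0) = z(t_0) > 0$; in particular $\nabla v(x_0)\ne 0$ and $|v(x_0)| < b$, so $x_0$ lies in the regular locus of $Q$ and the frame constructions below make sense. Since $\tilde z - Q$ attains its minimum at $x_0$, both $\nabla\tilde z(x_0) = \nabla Q(x_0)$ and $\Delta(\tilde z - Q)(x_0) \ge 0$ hold. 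The gradient identity makes the $\nabla\phi$-correction in $\Delta_\phi$ drop out, so we obtain the working inequality $\Delta_\phi\tilde z(x_0) \ge \Delta_\phi Q(x_0)$.

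The quantity $\Delta_\phi\tilde z(x_0)$ is computed by the chain rule together with the eigenvalue equation $\Delta_\phi v = -\lambda(v+a)$; using $v(x_0) = b\sin t_0$ and the identity $|\nabla v(x_0)|^2 = \lambda b^2\cos^2 t_0\cdot z(t_0)$ (which follows from $Q(x_0) = z(t_0)$), this gives an explicit rational expression in $z,\dot z,\ddot z,\sin t_0,\cos t_0$ and $c$. For $\Delta_\phi Q$ I apply the quotient identity $G\,\Delta_\phi Q = \Delta_\phi F - 2\nabla G\cdot\nabla Q - Q\,\Delta_\phi G$, with $F = |\nabla v|^2$ and $G = \lambda(b^2-v^2)$, together with the Bakry--\'Emery Bochner inequality already used in the proof of Proposition \ref{prop:lemma3}, namely $\tfrac{1}{2}\Delta_\phi F \ge ((n-1)K-\lambda)F + |\nabla\nabla v|^2$. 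A lower bound for $|\nabla\nabla v|^2$ comes from the gradient equality at $x_0$: writing out $\nabla Q$ explicitly and matching with $\nabla\tilde z(x_0) = \dot z(t_0)(b\cos t_0)^{-1}\nabla v(x_0)$ forces $\nabla^j v\,\nabla_i\nabla_j v = \tfrac{\lambda b}{2}(\dot z\cos t_0 - 2z\sin t_0)\nabla_i v$ at $x_0$. In a local orthonormal frame with $e_1$ aligned to $\nabla v(x_0)$ this pins down $\nabla_1\nabla_1 v = \tfrac{\lambda b}{2}(\dot z\cos t_0 - 2z\sin t_0)$ and $\nabla_i\nabla_1 v = 0$ for $i\ge 2$, whence the Cauchy--Schwarz bound $|\nabla\nabla v|^2 \ge (\nabla_1\nabla_1 v)^2$.

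Plugging this bound and the explicit values of $\nabla G$ and $\Delta_\phi G$ (both immediate from $\Delta_\phi v = -\lambda(v+a)$) into the quotient identity yields a concrete lower bound for $\Delta_\phi Q(x_0)$ polynomial in $z,\dot z,\sin t_0,\cos t_0$ and $c$. Inserting this into $\Delta_\phi\tilde z(x_0) \ge \Delta_\phi Q(x_0)$, dividing through by $2\lambda z(t_0) > 0$ (here hypothesis (c) enters in an essential way), and multiplying by $\cos^2 t_0$ collapses the inequality to exactly (\ref{eq:test_estimate}). The conceptual content is entirely in the maximum-principle setup and the Bochner/Cauchy--Schwarz estimate for $|\nabla\nabla v|^2$; the only real obstacle is the bookkeeping needed to make terms line up, where the summand $-2\delta\cos^2 t_0$ encodes the Bakry--\'Emery Ricci lower bound through $(n-1)K = 2\alpha = 2\lambda\delta$, and the concluding $\tfrac{\dot z}{4z}\cos t_0\{\cdots\}$ factor records the loss in the Cauchy--Schwarz estimate for $|\nabla\nabla v|^2$.
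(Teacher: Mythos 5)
Your proposal is correct and is essentially the paper's own argument: the same maximum-principle setup at the touching point $x_0$, the same first-order condition pinning down $\nabla_1\nabla_1 v=\tfrac{\lambda b}{2}(\dot z\cos t_0-2z\sin t_0)$ with $\nabla_i\nabla_1 v=0$ for $i\ge 2$, the Bakry--\'Emery Bochner inequality plus the Cauchy--Schwarz bound $|\nabla\nabla v|^2\ge(\nabla_1\nabla_1 v)^2$, and the same algebraic assembly leading to (\ref{eq:test_estimate}). The only cosmetic difference is that the paper clears the denominator, working with $J=\tfrac{1}{b^2}|\nabla v|^2-\lambda z\cos^2 t$ and the ordinary $\Delta$ while cancelling the $\nabla\phi$-terms explicitly through the first-order identity, whereas you keep the quotient $Q$ and apply $\Delta_\phi$ directly (the drift dropping out at the critical point); the two bookkeepings are equivalent.
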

%%%%%%%%%%%
\begin{proof}
Define
$$
	J(x):=\bigg\{
		\frac{|\nabla v|^2}{b^2-v^2}-\lambda z
	\bigg\}
	\cos^2 t
$$
where $t=\sin^{-1}(v(x)/b)$.
Since $z(t)\ge Z(t)$ and $z(t_0)=Z(t_0)$,
we have
$$
	J(x)\le 0 \,\,\, \mbox{for }\forall x\in M,
	\quad J(x_0)=0.
$$
Since $Z(t_0)>0$, we have
$$
	\nabla v(x_0)\neq 0.
$$
By the maximum principle, we have
$$
	\nabla J(x_0) =0,\,\,\,
	\Delta J(x_0)\le 0.
$$
Since
$
	\cos^2 t=1-(v/b)^2,
$
$J$ can be written by
$$
	J(x)=\frac{1}{b^2}|\nabla v|^2-\lambda z\cos^2t.
$$
Since $\nabla J(x_0)=0$, at $x_0\in M$
\begin{equation}\label{eq:nabla3_cos}
	\frac{2}{b^2}\nabla_j\nabla_i v\nabla^i v
	=
	\lambda\cos t
	\{
		\dot{z}\cos t-2z\sin t
	\}
	\nabla_j t.
\end{equation}
As before, we rotate the local orthonormal frame about  $x_0\in M$ such that
$$
	|\nabla_1v(x_0)|=|\nabla v(x_0)|
$$
and $\nabla_i v(x_0)=0$ for $i\ge 2$.
Note that $\nabla_i t(x_0)=0$ for $i\ge2$, because
\begin{equation}\label{eq:v_nabla_cos}
	\nabla_iv(x_0)=b\cos t_0\nabla_i t(x_0).
\end{equation}
From (\ref{eq:nabla3_cos}) and (\ref{eq:v_nabla_cos}), we have
\begin{equation}\label{eq:nabla3_cos-2}
	\nabla_1\nabla_1 v=\frac{\lambda b}{2}
	(\dot{z}\cos t-2z\sin t)
\end{equation}
and $\nabla_j\nabla_1 v=0\,\, (j\ge2)$ at $x_0\in M$.

Next we compute $\Delta J(x_0)$.
At $x_0\in M$, we have
\begin{eqnarray}
	\nonumber
		\Delta J(x_0)
	&=&
		\Delta\big(
			\frac{1}{b^2}|\nabla v|^2-\lambda z\cos^2 t
		\big)
	\\
	\nonumber
	&=&	
		\frac{2}{b^2}|\nabla\nabla v|^2
		+
		\frac{2}{b^2}\nabla^i v(\nabla^j\nabla_j\nabla_i v)
	\\
	\nonumber
	&&
		-\lambda\nabla^j
		(
			\dot{z}\cos^2 t\nabla_j t-2z\cos t\sin t\nabla_j t
		)
	\\
	\nonumber
	&=&	
		\frac{2}{b^2}|\nabla\nabla v|^2
	\\
	\nonumber
	&&	
		+
		\frac{2}{b^2}\nabla^i v(\nabla^j\nabla_j\nabla_i v)
	\\
	\nonumber
	&&
		-\lambda\ddot{z}\cos^2t|\nabla t|^2-\lambda\dot{z}\cos^2t(\Delta t)
	\\
	\label{eq:Delta_J}
	&&
		+4\lambda\dot{z}\cos t\sin t|\nabla t|^2
		-\lambda z\Delta(\cos^2t).
\end{eqnarray}
Before estimating the above, we prepare some calculations.
Remark that the following calculations are done at $x_0\in M$.
\begin{eqnarray}
	\label{eq:cal-1}
		|\nabla v|^2
	&=&
		\lambda	b^2z\cos^2 t;
		\quad
		(\mbox{due to } J(x_0)=0)
	\\
	\label{eq:cal-2}
		|\nabla t|^2
	&=&
		\frac{|\nabla v|^2}{b^2\cos^2 t}
		=
		\lambda z;
		\quad
		(\mbox{due to }(\ref{eq:cal-1}))
	\\
	\label{eq:cal-3}
		\frac{\Delta v}{b}
	&=&
		\Delta \sin t
		=
		\cos t(\Delta t)-\sin t|\nabla t|^2;
	\\
	\nonumber
		\Delta t
	&=&
		\frac{1}{\cos t}\big(
			\sin t|\nabla t|^2+\frac{\Delta v}{b}
		\big)
		\quad (\mbox{due to }(\ref{eq:cal-3}))
	\\
	\label{eq:cal-4}
	&=&
		\frac{1}{\cos t}
		\big(
			\lambda z\sin t 
			-
			\frac{\lambda}{b}(v+a)
			{+\frac{1}{b}
			(\nabla_i \phi \nabla^i v)
			}
		\big);
		\quad (\mbox{due to } (\ref{eq:cal-2}))
	\\
	\nonumber
		\Delta \cos^2 t
	&=&
		\Delta \big(1- \frac{v^2}{b^2} \big)
	\\
	\nonumber
	&=&
		-\frac{2}{b^2}|\nabla v|^2-\frac{2}{b^2}v\Delta v
	\\
	\label{eq:cal-5}
	&=&
		-2\lambda z \cos^2t-\frac{2}{b^2}v(-\lambda(v+a)
		{+\nabla_i\phi\nabla^i v}
		).
\end{eqnarray}

Now, at $x_0\in M$ we get
\begin{eqnarray}
	\nonumber
		\frac{2}{b^2}|\nabla\nabla v|^2
	&\ge&
		\frac{2}{b^2}(\nabla_1\nabla_1 v)^2
	\\
	\label{eq:cal-A}
	&=&
		\frac{\lambda^2}{2}(\dot{z})^2\cos^2t
		-
		2\lambda^2z\dot{z}\cos t\sin t +2\lambda^2z^2\sin^2t,
		\quad
		(\mbox{due to }(\ref{eq:nabla3_cos-2}))
\end{eqnarray}
and
\begin{eqnarray}
	\nonumber
		\frac{2}{b^2}\nabla^iv(\nabla^j\nabla_j\nabla_i v)
	&=&
		\frac{2}{b^2}\big(
			\nabla^jv\nabla_j(\Delta v)+\mbox{Ric}(\nabla v,\nabla v)
		\big)
	\\
	\nonumber
	&\ge&
		\frac{2}{b^2}\big(
			\nabla^j v\nabla_j(\Delta v)
			{-\nabla_i\nabla_j\phi\nabla^i v\nabla^j v}
			+
			2\alpha|\nabla v|^2
		\big)
	\\
	\nonumber
	&=&
		\frac{2}{b^2}\big(
			-\lambda |\nabla v|^2
			{+\nabla_i\phi\nabla^j v\nabla_j\nabla^i v}
			+2\alpha |\nabla v|^2
		\big)
	\\
	\label{eq:cal-B}
	&=&
		(-2\lambda^2+4\alpha\lambda)z\cos^2t
		{+\frac{2}{b^2}\nabla_i\phi\nabla^j v\nabla_j\nabla^i v},
	\\
	\nonumber
	&&
		(\mbox{due to }(\ref{eq:cal-1}))	
\end{eqnarray}
and
\begin{eqnarray}
	\nonumber
		-\lambda(\ddot{z}|\nabla t|^2+\dot{z}\Delta t)\cos^2t
	&=&
		-\lambda^2z\ddot{z}\cos^2t
		-
		\lambda\dot{z}\cos^2t
		\big(
			\frac{1}{\cos t}
		\big[
			\lambda z\sin t
	\\
	\nonumber
	&&
			-
			\frac{\lambda}{b}(v+a)
			+
			\frac{1}{b}
			(\nabla_i \phi\nabla^i v)
		\big]
		\big)
		\quad
		(\mbox{due to }(\ref{eq:cal-2}),\, (\ref{eq:cal-3}))
	\\
	\nonumber
	&=&
		-\lambda^2z\ddot{z}\cos^2t-\lambda^2z\dot{z}\cos t\sin t
		+\frac{1}{b}\lambda^2\dot{z}(v+a)\cos t
	\\
	\label{eq:cal-C}
	&&
		{-\frac{1}{b}\lambda \dot{z}\cos t
		\nabla_i\phi\nabla^i v},
\end{eqnarray}
and
\begin{eqnarray}
	\nonumber
		4\lambda\dot{z}\cos t\sin t|\nabla t|^2 - \lambda z \Delta \cos^2t
	&=&
		4\lambda^2z\dot{z}\cos t \sin t
		-
		\lambda z
		\big(
			-2\lambda z\cos^2t
	\\
	\nonumber
	&&
		+\frac{2}{b^2}\lambda v(v+a)
		-\frac{2}{b^2}
		v\nabla_i \phi \nabla^i v
		\big)
	\\
	\nonumber
	&&
		(\mbox{due to }(\ref{eq:cal-2}),\,\,(\ref{eq:cal-5}))
	\\
	\label{eq:cal-D}
	&=&
		4\lambda^2z\dot{z}\cos t\sin t +2\lambda^2z^2\cos^2t
	\\
	\nonumber
	&&
		-\frac{2}{b^2}\lambda^2z\sin t\cdot(v+a)
		{+\frac{2}{b^2}\lambda z v
		\nabla_i \phi\nabla^i v}.
\end{eqnarray}
Since (\ref{eq:nabla3_cos}) implies 
\begin{eqnarray*}
		\frac{2}{b^2}
		\nabla^j v \nabla_j\nabla^i v
	&=&
		\lambda(\cos t 
		\nabla^i t
		)[\dot{z}\cos t-2z\sin t]
	\\
	\nonumber
	&=&
		\frac{\lambda} {b}
		\nabla^i v
		[\dot{z}\cos t-2z\sin t]
	\\
	\nonumber
	&=&
		\frac{1}{b}\lambda\dot{z}\cos t 
		\nabla^i v
		-
		\frac{2}{b^2}\lambda z v 
		(\nabla^i v),
\end{eqnarray*}

getting $\Delta J(x_0)\le 0$, (\ref{eq:Delta_J}), (\ref{eq:cal-A}), (\ref{eq:cal-B}), (\ref{eq:cal-C}) and (\ref{eq:cal-D}) together, we have
\begin{eqnarray}
	\nonumber
		0
	&\ge&
		-\lambda^2z\ddot{z}\cos^2t
		+
		\frac{\lambda^2}{2}(\dot{z})^2\cos^2t
		+
		\lambda^2\dot{z}\cos t(z\sin t+c+\sin t)
	\\
	\nonumber
	&&
		+2\lambda^2z^2-2\lambda^2z-2\lambda^2cz\sin t+4\alpha\lambda z\cos^2t
\end{eqnarray}
at $x_0\in M$.
Dividing the two sides in the above inequality by $2\lambda^2 z(x_0)>0$, we have
\begin{eqnarray}
	\nonumber
		0
	&\ge&
		-\frac{1}{2}\ddot{z}(t_0)\cos^2t_0
		+\frac{1}{2}\dot{z}(t_0)\cos t_0
			\bigg(\sin t_0 +\frac{c+\sin t_0}{z(t_0)}\bigg)
		+z(t_0)
	\\
	\nonumber
	&&
		-1-c\sin t_0 +2\delta\cos^2t_0
		+\frac{1}{4z(t_0)}(\dot{z}(t_0))^2\cos^2t_0.
\end{eqnarray}
Therefore we get the desired inequality.
The proof is completed.
\end{proof}
Then, we have the same results as Corollary 6 and 7 in \cite{ling07} from Proposition \ref{prop:theorem5}.
\begin{corollary} Let the notations be as in Proposition \ref{prop:theorem5}.
\begin{enumerate}
\item\label{item:cor6}
If $z(t)$ satisfies the conditions (\ref{item:1}), (\ref{item:2}), (\ref{item:3}) in Proposition \ref{prop:theorem5}, 
$\dot{z}(t_0)\ge0$ and $1-c \le z(t_0) \le 1+a$, then we have
\begin{equation}\label{eq:cor6}
		0
		\le 
		\frac{1}{2}\ddot{z}(t_0)\cos^2t_0
		-
		\dot{z}(t_0)\cos t_0\sin t_0
		-
		z(t_0)
		+1+c\sin t_0-2\delta\cos^2t_0.
\end{equation}
\item\label{item:cor7}
Suppose that $a=0$. If $z(t)$ satisfies the conditions (\ref{item:1}), (\ref{item:2}), (\ref{item:3}) in Proposition \ref{prop:theorem5}, $\dot{z}(t_0)\sin t_0\ge0$ and $z(t_0) \le 1$, then we have
\begin{equation}\label{eq:cor7}
		0
		\le 
		\frac{1}{2}\ddot{z}(t_0)\cos^2t_0
		-
		\dot{z}(t_0)\cos t_0\sin t_0
		-
		z(t_0)
		+1-2\delta\cos^2t_0.
\end{equation}
\end{enumerate}
\end{corollary}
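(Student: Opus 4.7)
The plan is to derive both (a) and (b) directly from the master inequality (\ref{eq:test_estimate}) of Proposition~\ref{prop:theorem5}. The target inequalities (\ref{eq:cor6}) and (\ref{eq:cor7}) are exactly what one obtains by dropping the last bracketed term on the right-hand side of (\ref{eq:test_estimate}). Hence the problem reduces to showing that this extra term is non-positive under the respective hypotheses of (a) and (b).

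First I would rewrite the extra term in the more symmetric form
\begin{equation*}
-\frac{\dot z(t_0)\cos t_0}{4 z(t_0)}\bigl\{\dot z(t_0)\cos t_0 + 2(1-z(t_0))\sin t_0 + 2c\bigr\}.
\end{equation*}
Because $z(t_0) > 0$ by hypothesis~(\ref{item:3}) and $\cos t_0 > 0$ (since $|t_0| \le \sin^{-1}(1/b) < \pi/2$ for $b > 1$), non-positivity of this expression is equivalent to
\begin{equation*}
P \; := \; \dot z(t_0)\cdot\bigl\{\dot z(t_0)\cos t_0 + 2(1-z(t_0))\sin t_0 + 2c\bigr\} \; \ge \; 0.
\end{equation*}

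For part (a), the hypothesis $\dot z(t_0) \ge 0$ reduces the task (apart from the trivial case $\dot z(t_0) = 0$) to showing that the bracket itself is non-negative. Viewing the bracket as an affine function of $\sin t_0 \in [-1/b, 1/b]$, its minimum is attained at one of the two endpoints depending on the sign of $1 - z(t_0)$. A short case split using $1 - c \le z(t_0) \le 1 + a$ together with the identity $c = a/b$ should show that the minimum value is bounded below by $2c(1 - 1/b) \ge 0$ when $z(t_0) \le 1$ and by exactly $0$ when $z(t_0) \ge 1$. For part (b), the hypothesis $a = 0$ forces $c = 0$, so the bracket becomes $\dot z(t_0)\cos t_0 + 2(1-z(t_0))\sin t_0$; combining $z(t_0) \le 1$ with $\dot z(t_0)\sin t_0 \ge 0$ forces this bracket to have the same sign as $\dot z(t_0)$ in each of the subcases $\dot z(t_0) > 0$ and $\dot z(t_0) < 0$, so $P \ge 0$.

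The main (though minor) bookkeeping hurdle will be the subcase $z(t_0) \ge 1$ in part (a), where tightness of the upper bound $z(t_0) \le 1 + a$ is used essentially to make the two endpoint contributions $-2(1-z(t_0))/b$ and $2c$ cancel exactly; the remaining cases are routine sign analysis.
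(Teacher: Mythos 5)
Your proposal is correct and takes essentially the same route as the paper: both deduce (\ref{eq:cor6}) and (\ref{eq:cor7}) from (\ref{eq:test_estimate}) by showing the final term there is non-positive, i.e.\ that $\dot z(t_0)\{\dot z(t_0)\cos t_0+2(1-z(t_0))\sin t_0+2c\}\ge 0$, using $\cos t_0>0$, $z(t_0)>0$, $c=a/b$, $|\sin t_0|\le 1/b$ and the stated bounds on $z(t_0)$. The only cosmetic differences are that you split on the sign of $1-z(t_0)$ and minimize the affine expression over $\sin t_0\in[-1/b,1/b]$ where the paper splits on the sign of $t_0$, and a harmless sign slip in your prose (at $\sin t_0=1/b$ with $z(t_0)\le 1+a$ the endpoint contribution is $2(1-z(t_0))/b\ge-2c$, which is what cancels the $+2c$).
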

\begin{proof}
The proof is the same as \cite{ling07}, but we recall it for the reader's convenience.
It is sufficient to show that the last term in (\ref{eq:test_estimate}) is nonnegative.
First, let us prove the fist case (\ref{item:cor6}).
From the condition (\ref{item:3}) in Proposition \ref{prop:theorem5}, $\dot{z}(t_0)\ge0$, $\cos t_0\ge 0$ and $z(t_0)>0$, then it is sufficient to show that 
$$
	-z(t_0)\sin t_0+\sin t_0+c
$$
is nonnegative.
When $t_0\ge0$, we have
$$
	-z(t_0)\sin t_0+\sin t_0+c
	\ge
	-(1+a)\sin t_0 +\sin t_0 +c
	\ge
	a(\frac{1}{b}-\sin t_0)\ge0.
$$
The last inequality in above follows from that $|\sin t_0|=|\nu(t_0)/b|\le 1/b$.
When $t_0 <0$, we have
$$
	-z(t_0)\sin t_0+\sin t_0+c
	\ge
	-(1-c)\sin t_0 +\sin t_0 +c
	\ge
	c(1+\sin t_0)\ge0.
$$
Next we shall prove the second case (\ref{item:cor7}).
In this case, it is sufficient to show that
$$
	\dot{z}(t_0)(-z(t_0)\sin t_0+\sin t_0)=(1-z(t_0))\dot{z}(t_0)\sin t_0
$$
is nonnegative, because $c=0$.
It follows from the assumptions.
Therefore, the proof is completed.
\end{proof}
%%%%%%%
Moreover, we also have the following results which are same as Theorem 8 and Theorem 9 in \cite{ling07}.
%%%%%%%
\begin{proposition}\label{prop:theorem8}
	Assume that $a> 0$ and $\mu\delta \le \frac{4}{\pi^2}a$ for a constant $\mu\in(0,1]$.
	Then, we have
	$$
		\lambda\ge
		\frac{\pi^2}{d^2}+\frac{\mu}{2}(n-1)K
		=\frac{\pi^2}{d^2}+\mu\alpha.
	$$
	Here $d$ denotes the diameter of $(M,g)$.
\end{proposition}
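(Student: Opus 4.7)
My approach would mimic Ling's proof of Theorem~8 in \cite{ling07}, transplanted from the ordinary Laplacian setting to the Bakry--\'Emery one via Corollary~\ref{item:cor6}. The weighted curvature enters the argument only through the single extra term $-2\delta\cos^2 t$ already absorbed on the right-hand side of (\ref{eq:cor6}), so no further PDE work on the eigenfunction $v$ is needed: the rest of the proof is an ODE comparison on the interval $I := [-\sin^{-1}(1/b),\sin^{-1}(1/b)]$ for an arbitrary $b > 1$.

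The first step is to introduce an explicit one-variable test function of the form $z(t) = 1 + c\sin t - \mu\delta\cos^2 t$ (patterned on Ling), and to verify the structural inequalities $1 - c \le z(t) \le 1 + a$, $z(t) > 0$, and $\dot z(t)\cos t \ge 0$ on $I$. The hypothesis $\mu\delta \le \frac{4}{\pi^2}a \le \frac{a}{2}$ guarantees each of these (recall $c = a/b$ with $b$ close to $1$), and these are exactly the auxiliary conditions needed to invoke Corollary~\ref{item:cor6} at an interior extremum.

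Next I would establish the pointwise bound $z(t) \ge Z(t)$ on $I$ by contradiction. If $Z - z > 0$ somewhere, $Z - z$ attains a positive maximum at some $t_0 \in I$; then hypotheses (\ref{item:1})--(\ref{item:3}) of Proposition~\ref{prop:theorem5} hold for $z$ at $t_0$, and the bounds above let Corollary~\ref{item:cor6} apply, so the differential inequality (\ref{eq:cor6}) is valid at $t_0$. A direct algebraic substitution of $\ddot z, \dot z, z$ into the right-hand side of (\ref{eq:cor6}), together with the constraint $\mu\delta \le \frac{4}{\pi^2}a$ and $\mu \le 1$, would show that this right-hand side is strictly negative at $t_0$, a contradiction.

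With $z \ge Z$ established, one has $|\nabla v|^2 \le \lambda\,z(t)(b^2 - v^2)$ pointwise on $M$. Integrating along a minimizing geodesic from a minimum of $v$ (value $-1$) to a maximum (value $+1$) and substituting $v = b\sin t$ yields
$$d \ge \int_{-\sin^{-1}(1/b)}^{\sin^{-1}(1/b)} \frac{dt}{\sqrt{\lambda\,z(t)}}.$$
Letting $b \searrow 1$ enlarges the integration range to $[-\pi/2, \pi/2]$, and an explicit evaluation/estimate of the resulting elementary integral (using $\delta = \alpha/\lambda$) produces $d\sqrt{\lambda - \mu\alpha} \ge \pi$, which rearranges to the asserted bound $\lambda \ge \pi^2/d^2 + \mu\alpha$. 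The main obstacle is the second step: verifying that under the sharp numerical constraint $\mu\delta \le \frac{4}{\pi^2}a$ the expanded right-hand side of (\ref{eq:cor6}) is indeed strictly negative at the hypothetical $t_0$, and that the structural hypotheses of Corollary~\ref{item:cor6} (in particular $\dot z(t_0) \ge 0$ and the two-sided bound on $z(t_0)$) are automatic at such a $t_0$. The value $4/\pi^2$ is engineered precisely so that this sign check succeeds and the subsequent integral in the last step closes.
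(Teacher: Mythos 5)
Your overall skeleton (one-variable test function, touching-point argument via Corollary 3.6(a)/(\ref{eq:cor6}), then integration of $|\nabla t|/\sqrt{z(t)}$ along a minimizing geodesic and a H\"older-type estimate) is exactly the paper's route, which in turn is Ling's. But the step you yourself flag as "the main obstacle" is where the proposal breaks: the ansatz $z(t)=1+c\sin t-\mu\delta\cos^2 t$ does not satisfy the differential inequality needed for the contradiction. Writing $F(t):=\tfrac12\ddot z\cos^2 t-\dot z\cos t\sin t-z+1+c\sin t-2\delta\cos^2 t$, one computes
\begin{equation*}
F(t)=\cos^2 t\Bigl[-\tfrac32\,c\sin t+2\delta(\mu-1)-4\mu\delta\sin^2 t\Bigr],
\end{equation*}
because the naive term $c\sin t$ in $z$ does not cancel the inhomogeneous term $c\sin t_0$ of (\ref{eq:cor6}) but leaves the residue $-\tfrac32 c\sin t\cos^2 t$. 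For $t<0$ this residue is positive of order $c|t|$, while the available negative terms are only $O(\delta t^2)$ (and vanish identically when $\mu=1$, the case actually needed in Case (B-1) of Theorem \ref{thm:main_ling}); so $F>0$ on essentially the whole negative half-interval and no contradiction can be extracted at a touching point there. This is precisely why Ling, and the paper, take $z=1+c\eta+\mu_\epsilon\delta\xi$ with the specific functions $\eta,\xi$ (solutions of the associated ODEs), for which the $c\sin t_0$ term is cancelled exactly and the sign check genuinely closes; no elementary trigonometric ansatz of your form can replace them.

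There is a second, quantitative gap: even if the comparison worked, your test function cannot give the stated constant. The final H\"older step yields $\sqrt{\lambda}\,d\ge\bigl(\pi^3/\int_{-\pi/2}^{\pi/2}z\,dt\bigr)^{1/2}$, and with your $z$ one has $\int_{-\pi/2}^{\pi/2}z\,dt=\pi\bigl(1-\tfrac{\mu\delta}{2}\bigr)$ (since $\int\cos^2 t\,dt=\pi/2$), which after the rearrangement $\lambda(1-\tfrac{\mu\delta}{2})\ge\pi^2/d^2$ gives only $\lambda\ge \pi^2/d^2+\tfrac{\mu}{2}\alpha$, half the asserted gain; your claimed identity $d\sqrt{\lambda-\mu\alpha}\ge\pi$ does not come out of this integral. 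The paper's $\xi$ is normalized so that $\int_{-\pi/2}^{\pi/2}\xi\,dt=-\pi$, which is what produces the full $\mu\alpha$ (after letting $\epsilon\to0$ in $\mu_\epsilon=\mu-\epsilon$ and $b\to1$). Note also that the last rearrangement uses $1-\mu_\epsilon\delta\ge\tfrac12>0$, which the paper obtains from Proposition \ref{prop:lemma3} ($\delta\le\tfrac12$); this ingredient is absent from your outline.
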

\begin{proposition}\label{prop:theorem9}
	Assume that $a=0$.
	Then, we have
	$$
		\lambda \ge \frac{\pi^2}{d^2}+\frac{1}{2}(n-1)K.
	$$
\end{proposition}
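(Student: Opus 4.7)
The plan is to adapt Ling's argument for \cite[Theorem 9]{ling07} to the Bakry-\'Emery setting, using Corollary~\ref{item:cor7} as the replacement for the corresponding maximum-principle inequality. Because $a = 0$, the extremal eigenfunction satisfies $\max v = -\min v = 1$, the asymmetric term $c \sin t$ in (\ref{eq:test_estimate}) drops out, and (\ref{eq:cor7}) has \emph{exactly} the same structural form as Ling's one-dimensional comparison inequality, with the curvature lower bound $(n-1)K$ of \cite{ling07} effectively replaced by $2\alpha = (n-1)K$ (note our $\alpha = \tfrac{1}{2}(n-1)K$). This is what will ultimately produce the factor $\tfrac{1}{2}$ in front of $(n-1)K$.

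First I would fix $b > 1$, set $T = \sin^{-1}(1/b)$, and construct a smooth test function $z : [-T, T] \to \mathbb{R}$ verifying hypotheses (\ref{item:1})--(\ref{item:3}) of Proposition~\ref{prop:theorem5} together with $\dot z(t_0)\sin t_0 \ge 0$ and $z(t_0) \le 1$. The natural candidate is the one-parameter family of solutions of the borderline ODE
\[
\tfrac{1}{2}\ddot z(t)\cos^2 t - \dot z(t)\cos t\sin t - z(t) + 1 - 2\delta\cos^2 t = 0,
\]
slid by a free additive constant until its graph just touches $Z(t)$ from above at some $t_0 \in [-T, T]$. At the contact point, Corollary~\ref{item:cor7} forces the ODE inequality (\ref{eq:cor7}) to hold, while the monotonicity properties needed for $\dot z(t_0)\sin t_0 \ge 0$ are built into the construction by choosing $z$ even and non-increasing on $[0, T]$.

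Second, I would convert the resulting ODE information into a diameter bound by the standard Obata-type integration: along a minimizing unit-speed geodesic from a point where $v = -1$ to a point where $v = 1$, the gradient estimate (\ref{eq:estimate_nabla_v}) together with $|\nabla v|^2/[\lambda(b^2-v^2)] \le z(\sin^{-1}(v/b))$ gives, after the change of variable $t = \sin^{-1}(v/b)$,
\[
\sqrt{\lambda}\, d_g \;\ge\; \int_{-T}^{T} \frac{dt}{\sqrt{z(t)}}.
\]
Taking $b \to 1^+$ and using the explicit form of the extremal $z$ (which in the $\delta$-corrected Ling ODE integrates to $\pi/\sqrt{1-\delta}$, or equivalently to $\pi/\sqrt{1 - \alpha/\lambda}$), the inequality rearranges to $\lambda \ge \pi^2/d_g^2 + \alpha = \pi^2/d_g^2 + \tfrac{1}{2}(n-1)K$.

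The main obstacle is verifying condition (\ref{item:1}) globally: one must show the chosen $z$ dominates $Z(t)$ on the entire interval $[-T, T]$, not merely at the tangency point. Ling handles this by an explicit qualitative analysis of his ODE's solutions (monotonicity, convexity, and boundary behavior at $t = \pm T$), and since our inequality (\ref{eq:cor7}) has precisely the same form as his, with $K$ rescaled into $\alpha$, those arguments transfer with no change. The only feature genuinely new to the Bakry-\'Emery case is the weaker spectral bound in Proposition~\ref{prop:lemma3}, which is already folded into the constant $\alpha$; no additional cross-terms between $\nabla\phi$ and $\nabla v$ survive because they were absorbed in passing from $\Delta$ to $\Delta_\phi$ in the proof of Proposition~\ref{prop:theorem5}.
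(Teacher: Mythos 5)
Your proposal follows essentially the same route as the paper, which itself only outlines the argument and defers to Ling: take the test function $y(t)=1+\delta\xi(t)$ (a solution of the borderline version of (\ref{eq:cor7})), use Corollary (\ref{item:cor7}) at a contact point to force $Z(t)\le y(t)$, then integrate $|\nabla t|/\sqrt{z(t)}$ along a minimizing geodesic and use $\int_{-\pi/2}^{\pi/2}y\,dt=(1-\delta)\pi$ together with $\delta\le 1/2$ (Proposition \ref{prop:lemma3}) to rearrange into $\lambda\ge \pi^2/d^2+\alpha$. One small correction to your hypothesis check: to have $\dot z(t_0)\sin t_0\ge 0$ you need the (even) test function to be non\emph{decreasing} on $[0,T]$, not non-increasing; indeed Ling's $\xi$ is even, nonpositive, and increases in $|t|$ up to $\xi(\pm\pi/2)=0$, which is exactly what makes $\dot z(t_0)\sin t_0\ge0$ hold, while the condition $z(t_0)\le 1$ at the contact point is automatic because the gradient estimate (\ref{eq:estimate_nabla_v}) gives $Z\le 1$ when $a=0$. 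With that sign fixed, your outline matches the paper's proof.
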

%%%%%%%
\begin{proof}
The proofs of Propositions \ref{prop:theorem8} and \ref{prop:theorem9} are the same as \cite{ling07}.
So, we shall give just the outline of proofs for the reader's convenience (See the original paper \cite{ling07} for the full details).
The key point of the proofs is to choose right functions $z(t)$ in (\ref{eq:cor6}) and (\ref{eq:cor7}) enough to prove.

First, let us see the proof of Proposition \ref{prop:theorem8}.
Let $\mu_\epsilon:=\mu-\epsilon>0$ for a small positive constant $\epsilon$.
Take $b>1$ close to $1$ such that $\mu_\epsilon <\frac{4}{\pi^2}c$.
Let
$$
	z(t):=1+c\eta(t)+\mu_\epsilon\delta\xi(t),
$$
where
$$
	\xi(t):=
	\frac
	{\cos t^2+2t\sin t\cos t +t^2-\frac{\pi^2}{4}}
	{\cos t^2}
	\quad
	\mbox{for }
	t\in \bigl[-\frac{\pi}{2}, \frac{\pi}{2}\bigr],
$$
and
$$
	\eta(t):=
	\frac
	{\frac{4}{\pi}t+\frac{4}{\pi}\cos t\sin t -2\sin t}
	{\cos^2 t}
	\quad
	\mbox{for }
	t\in \bigl[ -\frac{\pi}{2}, \frac{\pi}{2} \bigr].
$$
The needed properties of $\xi(t)$ and $\eta(t)$ for the proofs are studied in Section 4 of \cite{ling07}.
By using such properties and (\ref{eq:cor6}), we can show
\[
	Z(t)\le z(t),
	\quad
	\mbox{for }
	t\in [-\sin^{-1}(1/b), \sin^{-1}(1/b)].
\]
The above implies 
\begin{equation}\label{eq:lambda}
	\sqrt{\lambda} \ge \frac{|\nabla t|}{\sqrt{z(t)}}
	\quad
	\mbox{for }
	t\in [-\sin^{-1}(1/b), \sin^{-1}(1/b)].
\end{equation}
Let $q_1$ and $q_2$ be the two points in $M$ such that $\nu(q_1)=-1$ and $\nu(q_2)=1$ respectively.
Let $L$ be the minimum geodesic between $q_1$ and $q_2$.
Integrating the both sides of (\ref{eq:lambda}) along $L$ and changing variable, then we have
$$
	\sqrt{\lambda}d
	\ge
	\int_L \frac{|\nabla t|}{\sqrt{z(t)}}dl
	=
	\int^{\frac{\pi}{2}}_{-\frac{\pi}{2}}\frac{1}{\sqrt{z(t)}}dt
	\ge
	\frac{\big(\int^{\pi/2}_{-\pi/2}dt\big)^{3/2}}{\big(\int^{\pi/2}_{-\pi/2}z(t)dt\big)^{1/2}}
	\ge
	\biggl(
		\frac{\pi^3}{\int_{-\pi/2}^{\pi/2}z(t)dt}
	\biggr)^{1/2}.
$$
From the properties of $\xi(t)$ and $\eta(t)$, and the definition of $z(t)$, we can show that 
$$
	\int_{-\pi/2}^{\pi/2}z(t)dt=(1-\mu_\epsilon\delta)\pi.
$$
Hence we have
\begin{equation}\label{eq:lambda2}
	\lambda \ge 
	\frac{\pi^2}{(1-\mu_\epsilon\delta)d^2}.
\end{equation}
From $\mu_\epsilon<\mu\le1$ and 
$$
	\delta=\frac{\alpha}{\lambda}\le \frac{(n-1)K}{2}\cdot \frac{1}{(n-1)K}=\frac{1}{2}
	\quad
	(\mbox{due to Proposition \ref{prop:lemma3}} ),
$$
we have
$
	1-\mu_\epsilon\delta\ge\frac{1}{2}>0.
$
Then (\ref{eq:lambda2}) implies
$$
	\lambda 
	\ge 
	\frac{\pi^2}{d^2}+\lambda\mu_\epsilon\delta
	=
	\frac{\pi^2}{d^2}+\mu_\epsilon\alpha.
$$
Letting $\epsilon\to 0$, we have the desired inequality.

Next, let us see the proof of Proposition \ref{prop:theorem9}.
In this case, the test function is defined by
$$
	y(t)=1+\delta\xi(t).
$$
Then, by using (\ref{eq:cor7}), we can get $Z(t)\le y(t)$ for $t\in [-\pi/2, \pi/2]$.
The rest of the proof is similar to that of Proposition \ref{prop:theorem8}.
\end{proof}
%%%%
\begin{theorem}\label{thm:main_ling}
	Let $(M,g, \phi)$ be an $n$-dimensional compact Bakry-\'Emery manifold without boundary.
	Assume that there exists a constant $K>0$ such that
	$$
		\mathrm{Ric}_\phi \ge (n-1)K\,g
	$$
	for all $x\in M$.
	Then the first non-zero eigenvalue $\lambda$ of the Bakry-\'Emery Laplacian $\Delta_\phi$ on $M$ satisfies
	$$
		\lambda \ge \frac{\pi^2}{d^2}+\frac{31}{100}(n-1)K
		\quad
		\mbox{for } n\ge 2,
	$$
	where $d$ denotes the diameter of $(M,g)$.
\end{theorem}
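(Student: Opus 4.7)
The plan is to combine the three results of Section~3: the Lichnerowicz-type bound $\lambda\ge(n-1)K$ of Proposition~\ref{prop:lemma3}, the asymmetric estimate of Proposition~\ref{prop:theorem8}, and the symmetric estimate of Proposition~\ref{prop:theorem9}. Since the normalized first eigenfunction has extrema $+1$ and $-k$ with $k\in(0,1]$, the parameter $a=(1-k)/(1+k)\in[0,1)$ naturally organizes the case analysis.

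For $a=0$, Proposition~\ref{prop:theorem9} immediately gives $\lambda\ge\pi^2/d^2+\tfrac{1}{2}(n-1)K$, which exceeds the claim since $\tfrac{1}{2}>\tfrac{31}{100}$. For $a>0$, I would apply Proposition~\ref{prop:theorem8} with the largest admissible $\mu\in(0,1]$ subject to $\mu\delta\le 4a/\pi^2$, where $\delta=\alpha/\lambda$ and $\alpha=\tfrac{1}{2}(n-1)K$. Either $\mu=1$ is admissible, in which case $\lambda\ge\pi^2/d^2+\alpha$ trivially yields the claim, or the maximal choice $\mu=4a\lambda/(\pi^2\alpha)<1$ produces the implicit bound $\lambda(1-4a/\pi^2)\ge\pi^2/d^2$, i.e.\ $\lambda\ge\pi^2/[d^2(1-4a/\pi^2)]$. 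To extract the coefficient $\tfrac{31}{100}$ in the latter scenario, I would bifurcate on the diameter: when $d^2\ge 100\pi^2/[69(n-1)K]$, Proposition~\ref{prop:lemma3} alone gives $\lambda-\pi^2/d^2\ge(n-1)K-\tfrac{69}{100}(n-1)K=\tfrac{31}{100}(n-1)K$; when $d^2\le 200a/[31\alpha(1-4a/\pi^2)]$, the implicit bound delivers the same excess $4a/[d^2(1-4a/\pi^2)]\ge\tfrac{31}{100}(n-1)K$. The coefficient $\tfrac{31}{100}$ is precisely what makes these two diameter thresholds coincide at the critical value $a_*=31\pi^2/400$.

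The main obstacle is closing the argument in the intermediate window $a<a_*$ and $d$ between the two thresholds, where neither estimate is tight individually. Following Ling~\cite{ling07}, I would handle this regime by iterating Proposition~\ref{prop:theorem8} with a graded sequence of values of $\mu$, using the bound $\delta\le\tfrac{1}{2}$ from Proposition~\ref{prop:lemma3} to keep the admissibility condition $\mu\delta\le 4a/\pi^2$ satisfied throughout. The only technical difference from \cite{ling07} is that Proposition~\ref{prop:lemma3} provides $(n-1)K$ in place of Ling's sharper Lichnerowicz bound $nK$; this loss is compensated for by the corresponding weakening of the admissibility constraint via $\delta\le\tfrac{1}{2}$, so the same optimization reproduces the universal constant $\tfrac{31}{100}$.
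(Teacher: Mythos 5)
Your case analysis correctly disposes of the easy regimes: $a=0$ via Proposition \ref{prop:theorem9}, the regime where $\mu=1$ is admissible (i.e.\ $\frac{\pi^2}{4}\delta\le a$) via Proposition \ref{prop:theorem8}, and the large-$a$ regime via the combination of the implicit bound $\lambda(1-\frac{4a}{\pi^2})\ge\frac{\pi^2}{d^2}$ with Proposition \ref{prop:lemma3} (your diameter bifurcation at $a_*=\frac{31\pi^2}{400}\approx 0.765$ is a valid, essentially equivalent repackaging of the paper's Case (B-2-a), where the paper simply writes $\lambda\ge\frac{\pi^2}{d^2}+\frac{4a}{\pi^2}\lambda\ge\frac{\pi^2}{d^2}+\frac{8a}{\pi^2}\alpha$ and checks $\frac{8(0.765)}{\pi^2}>\frac{31}{50}$). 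Note also that when $1.53\,\delta\le a<0.765$ the implicit bound alone already suffices, since then $\frac{4a}{\pi^2}\lambda=\frac{4a}{\pi^2\delta}\alpha\ge\frac{4(1.53)}{\pi^2}\alpha>\frac{31}{50}\alpha$; no bifurcation is needed there.

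The genuine gap is the remaining case $0<a<1.53\,\delta$ (the paper's Case (B-2-b2)), and your proposed fix cannot close it. The admissibility constraint of Proposition \ref{prop:theorem8} caps $\mu$ at $\frac{4a}{\pi^2\delta}$, so any application of that proposition yields an excess over $\frac{\pi^2}{d^2}$ of at most $\mu\alpha=\frac{4a}{\pi^2}\lambda$; ``iterating with a graded sequence of values of $\mu$'' only converges to the fixed point of $\lambda\mapsto\frac{\pi^2}{d^2}+\frac{4a}{\pi^2}\lambda$, i.e.\ to the implicit bound you already have, and when $a$ is small relative to $\delta$ (e.g.\ $a=0.01$, $\delta$ near $\frac12$) this excess is a small fraction of $\alpha$ and can never reach $\frac{31}{50}\alpha$. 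Moreover, Ling does not handle this case by iterating $\mu$: he (and the paper, quoting him) introduces a \emph{new} test function $z(t)=1+c\eta(t)+(\delta-\sigma c^2)\xi(t)$ with an explicit $\sigma>0$ and reruns the maximum-principle barrier argument through the inequality (\ref{eq:cor6}) to prove $Z(t)\le z(t)$, then integrates along a minimal geodesic as in Proposition \ref{prop:theorem8}; this refined barrier construction is the essential missing ingredient in your proposal. Finally, your claim that the loss from the weaker Lichnerowicz bound ($\lambda\ge(n-1)K$ instead of Ling's $nK$) is ``compensated by the weakening of the admissibility constraint via $\delta\le\frac12$'' is backwards: a weaker lower bound on $\lambda$ makes $\delta=\alpha/\lambda$ potentially larger, hence the constraint $\mu\delta\le\frac{4}{\pi^2}a$ harder to satisfy. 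The paper's actual point is that Proposition \ref{prop:lemma3} enters only in the case $a\ge 0.765$, where the numerics $\frac{8(0.765)}{\pi^2}>\frac{31}{50}$ still go through with the weaker constant, while all other cases are verbatim as in Ling.
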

%%%%%%%%
\begin{proof}
The proof is almost same as in \cite{ling07}, because all necessary results for the proof hold  in the same way as in \cite{ling07} except Proposition \ref{prop:lemma3}.
The proof in \cite{ling07} are proceeded separately in the following cases;
\begin{itemize}
	\item 
		Case (A): $a=0$.
	\item
		If $0<a<1$,
			\begin{itemize}
				\item 
					Case (B-1): $\frac{\pi^2}{4}\delta \le a$.
				\item
					If $a<\frac{\pi^2}{4}\delta$,
						\begin{itemize}
							\item
								Case (B-2-a): $0.765 \le a $.
							\item
								If $0< a < 0.765$,
									\begin{itemize}
										\item
											Case (B-2-b1): $1.53\delta\le a $.
										\item
											Case (B-2-b2): $a <1.53\delta$.
									\end{itemize}
						\end{itemize}
			\end{itemize}
\end{itemize}
The case whose proof requires Proposition \ref{prop:lemma3} is only Case (B-a-2): $0.765 \le a <\frac{\pi^2}{4}\delta$.
This means that we can prove the other cases in the same way and get the same statement as in \cite{ling07}.
For the reader's convenience, we shall give the outline of the proofs except Case (B-a-2).
In the cases except (B-a-2) and (B-2-b2), we can prove by applying Propositions \ref{prop:theorem8} and \ref{prop:theorem9} directly.
More precisely, we can apply
\begin{center}
	\begin{tabular}{lll}
			Proposition \ref{prop:theorem9} &  & to Case (A);
		\\
			Proposition \ref{prop:theorem8} & for $\mu=1$ & to Case (B-1);
		\\
			Proposition \ref{prop:theorem8} & for $\mu=\frac{4}{\pi^2}\frac{a}{\delta}$ 
			& to Case (B-2-b1).
	\end{tabular}
\end{center}
Then, from Cases (A), (B-1) we can get
$$
	\lambda \ge \frac{\pi^2}{d^2}+\frac{1}{2}(n-1)K.
$$
From Case (B-2-b1) we can get
\begin{equation}\label{eq:31/100}
	\lambda \ge \frac{\pi^2}{d^2}+\frac{31}{100}(n-1)K.
\end{equation}
The proof in Case (B-2-b2) is essentially similar to that of Proposition \ref{prop:theorem8}, although it is more technical than other cases.
The essential part of the proof is to show $Z(t)\le z(t)$ on $[-\sin^{-1}(1/b),\sin^{-1}(1/b)]$ with respect to a new test function
$$
	z(t):=1+c\eta(t)+(\delta-\sigma c^2)\xi(t)
	\quad
	\mbox{for }
	t\in [-\sin^{-1}(1/b),\sin^{-1}(1/b)],
$$
where
$$
	\sigma
	=
	\frac{\tau}
	{\biggl(
		\bigl[
			\frac{3}{2}-\frac{\pi^2}{8}
			-\bigl(\frac{\pi^2}{32}-\frac{1}{6}\bigr)\frac{153}{100}
		\bigr]
		\frac{200}{153}
		-\frac
		{\bigl(\frac{8}{3\pi}-\frac{\pi}{4}\bigr)^2}
		{\bigl[-1+(12-\pi^2)\frac{100}{153}\bigr]}
	\biggr)c}
	>0.
$$
Then, we can get (\ref{eq:31/100}).

Next, let us consider Case (B-a-2).
Since $\big(\frac{4}{\pi^2}\frac{a}{\delta}\big) \in (0,1]$ and $\big(\frac{4}{\pi^2}\frac{a}{\delta}\big)\delta =\frac{4}{\pi^2}a$, we can apply Proposition \ref{prop:theorem8} by putting $\mu=\big(\frac{4}{\pi^2}\frac{a}{\delta}\big)$.
Then we have
$$
	\lambda 
	\ge 
	\frac{\pi^2}{d^2}
	+
	\big(\frac{4}{\pi^2}\frac{a}{\delta}\big)\alpha
	=
	\frac{\pi^2}{d^2}
	+\frac{4a}{\pi^2}\lambda.
$$
On the other hand, Proposition \ref{prop:lemma3} implies
$$
	\lambda \ge 
	{(n-1)}
	K
	=
	{2\alpha}.
$$
%%%%%%%%
\begin{remark}
This inequality corresponds to
$
	\lambda \ge \frac{2n}{n-1}\alpha
$
in pp.397,  \cite{ling07}.
\end{remark}
%%%%%%%%
From these two inequalities, we have
\begin{eqnarray*}
		\lambda
	&\ge&
		\frac{\pi^2}{d^2}+
		{\frac{8a}{\pi^2}\alpha}
	\\
	&\ge&
		\frac{\pi^2}{d^2}+
		{\frac{8(0.765)}{\pi^2}\alpha}.
\end{eqnarray*}
Since
$$
	\frac{8(0.765)}{\pi^2}>0.62=\frac{31}{50},
$$
we get
$$
	\lambda
	\ge
	\frac{\pi^2}{d^2}+
	\frac{31}{50}\alpha
	=
	\frac{\pi^2}{d^2}+
	\frac{31}{100}(n-1)K,
$$
which is (\ref{eq:31/100}).
\end{proof}
%%%%%%
%%%
\begin{remark}
In \cite{ling07}, it is proved that we can get the better result when $n=2$ than $n\ge3$ for ordinary Laplacian.
However, since our Proposition \ref{prop:lemma3} is weaker than Lemma 3 in \cite{ling07}, the same argument does not hold for Bakry-\'Emery Laplacian.
\end{remark}
%%%

\begin{corollary}\label{Cor}
Let $(M,g)$ be a compact nontrivial gradient shrinking soliton with $\dim M \ge 4$ satisfying
\begin{equation*}
	R_{ij}-\gamma g_{ij}+\nabla_i\nabla_j f=0.
\end{equation*}
Let $\Delta_f$ be the corresponding Bakry-\'Emery Laplacian defined by
$$
	\Delta_f=\Delta-\nabla f \cdot \nabla
$$
where $\Delta=g^{ij}\nabla_i\nabla_j$.
If $-\lambda$ is the first non-zero eigenvalue of $\Delta_f$ then we have
\begin{equation*}
		\lambda \ge \frac{\pi^2}{d^2}+\frac{31}{100}\gamma.
\end{equation*}
\end{corollary}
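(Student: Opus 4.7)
The plan is to recognize this corollary as a direct application of Theorem \ref{thm:main_ling} to the Bakry-\'Emery manifold $(M,g,f)$, where $f$ is the potential of the gradient shrinking Ricci soliton. The essential content is already embedded in Theorem \ref{thm:main_ling}; what remains is to verify the hypothesis on the Bakry-\'Emery Ricci curvature with the correct constant.

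First, I would rewrite the soliton equation
\begin{equation*}
    R_{ij} - \gamma g_{ij} + \nabla_i\nabla_j f = 0
\end{equation*}
in the equivalent form
\begin{equation*}
    \mathrm{Ric}_f := \mathrm{Ric}(g) + \mathrm{Hess}(f) = \gamma g.
\end{equation*}
In particular, $\mathrm{Ric}_f \ge \gamma g$ pointwise on $M$. Setting $\phi = f$ and choosing $K = \gamma/(n-1) > 0$, this gives precisely the hypothesis
\begin{equation*}
    \mathrm{Ric}_\phi \ge (n-1)K\,g
\end{equation*}
of Theorem \ref{thm:main_ling}, with $(n-1)K = \gamma$.

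Next, since $M$ is compact without boundary (the soliton is compact by assumption) and $\dim M \ge 4 \ge 2$, all the hypotheses of Theorem \ref{thm:main_ling} are satisfied with $\phi = f$. Applying that theorem to the first nonzero eigenvalue $\lambda$ of $\Delta_f$ yields
\begin{equation*}
    \lambda \ge \frac{\pi^2}{d^2} + \frac{31}{100}(n-1)K = \frac{\pi^2}{d^2} + \frac{31}{100}\gamma,
\end{equation*}
which is the asserted inequality.

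There is essentially no obstacle here: all of the analytic work has been done in Section 3. The only step requiring any care is recording the correct identification of constants, namely $(n-1)K = \gamma$, so that the universal factor $31/100$ from Theorem \ref{thm:main_ling} multiplies $\gamma$ rather than some $n$-dependent expression. Any potential worry about the lower bound in Proposition \ref{prop:lemma3} (which gives $\lambda \ge (n-1)K$ instead of $\lambda \ge nK$) has already been accounted for inside the proof of Theorem \ref{thm:main_ling}, so it causes no additional loss when specializing to the soliton setting.
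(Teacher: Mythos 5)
Your proposal is correct and follows exactly the paper's route: the soliton equation gives $\mathrm{Ric}_f=\gamma g$, so Theorem \ref{thm:main_ling} applies with $\phi=f$ and $(n-1)K=\gamma$, yielding the stated bound. The paper's own proof is the same one-line application of Theorem \ref{thm:main_ling}; your identification of the constants and the remark about Proposition \ref{prop:lemma3} being already absorbed into that theorem are accurate.
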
 
\begin{proof}
We have only to apply Theorem \ref{thm:main_ling} with $\gamma = (n-1)K$.
\end{proof}

\section{Proof of Theorem \ref{main}}

\begin{proof}[Proof of Theorem \ref{main}]
Let $(M,g)$ be a compact nontrivial gradient shrinking soliton with $\dim M \ge 4$ satisfying
\begin{equation*}
	R_{ij}-\gamma g_{ij}+\nabla_i\nabla_j f=0.
\end{equation*}
Let $\Delta_f$ be the corresponding Bakry-\'Emery Laplacian defined by
$$
	\Delta_f=\Delta-\nabla f \cdot \nabla
$$
where $\Delta=g^{ij}\nabla_i\nabla_j$. By Lemma \ref{eigen} $\Delta_f$ has an eigenvalue $-2\gamma$.
Thus  by Corollary \ref{Cor} we have 
 \begin{equation}
	2\gamma \ge \frac{\pi^2}{d^2}+\frac{31}{100}\gamma,
\end{equation}
from which (\ref{soliton3}) follows easily. This completes the proof of Theorem \ref{main}.
\end{proof}

\end{document}